\def\Bbb{\mathbb}
\def\eea{\end{eqnarray*}}
\newtheorem{defn}{Definition}
\newtheorem{thm}{Theorem}[section]
\newtheorem{lem}[thm]{Lemma}
\newenvironment{rmk}{\mbox{ }\\{\bf  Remark}\mbox{ }}{
\hfill $\Box$\mbox{}\bigskip}
\begin{document}
\renewcommand{\theequation}{\thesection.\arabic{equation}}

\Large
\begin{center}
\textbf{Some refined higher type adjunction inequalities on 4-manifolds}
\end{center}
\vspace{4mm}
\normalsize

%\par{\centering \textit{Dedicated to our God}\par}
%\vspace{4mm}

\begin{center}
\textbf{Chanyoung Sung}\\
\vspace{5mm} \small{\itshape Department of Mathematics Educatuon\\ Korea National University of Education, Cheongju, Republic of Korea}
\end{center}

\hrulefill \vspace{2mm}

 \small {\textbf{Abstract.}  

We further sharpen higher type adjunction inequalities of P. Ozsv\'ath and Z. Szab\'o on a 4-manifold $M$ with a nonzero Seiberg-Witten invariant for a Spin$^c$ structure $\frak{s}$, when an embedded surface $\Sigma\subset M$ satisfies $[\Sigma]\cdot [\Sigma]\geq 0$ and
$$|\langle [\Sigma],c_1(\frak{s})\rangle|+[\Sigma]\cdot [\Sigma]\geq 2b_1(M).$$

 \vspace{2mm}

 \emph{\textbf{Keywords}}: Seiberg-Witten equations, adjunction inequality

 \emph{\textbf{Mathematics Subject Classification 2010}} : 57N13, 57R57}

\hrulefill \normalsize \hrulefill \normalsize

\footnotetext[1]{Date : \today}

\footnotetext[2]{E-mail address : cysung@kias.re.kr}
\thispagestyle{empty}

%\vfill
%\pagebreak
\setcounter{section}{0}
\setcounter{equation}{0}

\section{Introduction}

Given a Spin$^c$ structure $\frak{s}$ on a smooth closed oriented Riemannian 4-manifold $M$, for a section $\Phi$ of the plus spinor bundle $W_+$ of $\frak{s}$ and a $u(1)$ connection $A$ on $\det W_+$, the Seiberg-Witten equations are given by
$$\left\{
\begin{array}{ll} D_A\Phi=0\\
  F_{A}^+ +i\eta= \Phi\otimes\Phi^*-\frac{|\Phi|^2}{2}\textrm{Id},
\end{array}\right.
$$
where $D_A$ and $F_A^+$ respectively denote the associated Dirac operator and the self-dual part of the curvature $dA$ of $A$, a self-dual $2$-form $\eta$ is a generic perturbation term, and lastly the identification of both sides in the second equation comes from the Clifford action.

Its moduli space $\frak{M}$, i.e. the space of solutions modulo bundle automorphisms known as the gauge group $\mathcal{G}:=Map(M,S^1)$ is a smooth orientable manifold of dimension $$d(\frak{s}):=\frac{c_1(\frak{s})^2-(2\chi(M)+3\tau(M))}{4},$$ where $\chi$ and $\tau$ denote  Euler characteristic and signature respectively.

The intersection theory on $\frak{M}$ produces Seiberg-Witten invariants in the form of a function
\begin{eqnarray*}
SW_{M,\frak{s}}&:&\Bbb A(M)\rightarrow \Bbb Z\\ & & \ \ \ \alpha\longmapsto\langle \mu(\alpha),[\frak{M}]\rangle
\end{eqnarray*}
 where $\Bbb A(M)$ denotes the graded algebra obtained by tensoring the exterior algebra on $H_1(M;\Bbb Z)$ with grading one and the polynomial algebra on $H_0(M;\Bbb Z)$ with grading two, and the algebra homomorphism $\mu$ is defined as follows. For the positive generator $U$ of $H_0(M;\Bbb Z)$,
 $\mu(U)$ is the first Chern class of a principal $S^1$ bundle $\frak{M}_o$ over $\frak{M}$, where $\frak{M}_o$ is the solution space modulo the based gauge group $\mathcal{G}_o=\{h\in \mathcal{G}| h(o)=1\}$ for a fixed base point $o\in M$. For $[c]\in H_1(M,\Bbb Z)$,
$$\mu([c])=Hol_c^*([d\theta])$$ where $[d\theta]$ is the positive generator of $H^1(S^1,\Bbb Z)$, and  $Hol_c: \frak{M}\rightarrow  S^1$ is given by the holonomy of each connection around $c$.

Although  $SW_{M,\frak{s}}$ is a diffeomorphism invariant of $M$ for $b_2^+(M)> 1$,
when $b_2^+(M)= 1$, it depends on a chamber which is a connected component of $$\{\omega\in H^2(M;\Bbb R)-0\mid\omega^2\geq 0\}$$ so that Seiberg-Witten invariants may change according to which chamber the self-dual harmonic part of $-2\pi c_1(\frak{s})+\eta$ belongs to.
\begin{defn}
We call a Spin$^c$ structure $\frak{s}$ with $SW_{M,\frak{s}}(U^{\frac{d(\frak{s})}{2}})\ne 0$ a basic class of $M$, and  $M$ is called  of simple type, if $d(\frak{s})=0$ for any basic class $\frak{s}$ of $M$.
\end{defn}
%A well-known conjecture on 4-manifolds is the following.
%\begin{conj}
%If $M$ is simply connected and $b_2^+(M)> 1$, then $M$ is of simple type.
%\end{conj}
%It has been known to be true for symplectic 4-manifolds, and we will use higher type adjunction inequalities to give a proof of the conjecture in full generality in Section 2.

One of major applications of Seiberg-Witten theory is the resolution of  the (generalized) Thom conjecture stating that a closed symplectic surface in a closed symplectic 4-manifold is genus-minimizing in its homology class. This is generalized to the adjunction inequality  on any 4-manifold with a nontrivial Seiberg-Witten invariant. On a smooth closed oriented 4-manifold $M$ of $b_2^+(M)>1$ and simple type, any embedded closed surface $\Sigma$ with genus $g(\Sigma)>0$  satisfies
$$|\langle [\Sigma],c_1(\frak{s})\rangle|+[\Sigma]\cdot [\Sigma]\leq 2g(\Sigma)-2$$ for any  basic class $\frak{s}$. If $[\Sigma]\cdot[\Sigma] \geq 0$, the simply type condition is unnecessary and moreover the inequality can be enhanced to the following.
\begin{thm}[P. Ozsv\'ath and Z. Szab\'o \cite{OS2}]\label{th1}
Let $M$ be a  smooth closed oriented 4-manifold  and $\Sigma\subset M$ be an embedded oriented surface with genus $g(\Sigma)>0$ representing a non-torsion homology class with $[\Sigma]\cdot[\Sigma]\geq 0$.

If $b_2^+(M)> 1$, then
$$|\langle [\Sigma],c_1(\frak{s})\rangle|+[\Sigma]\cdot [\Sigma]+(2-\min(b_1(M),1))d(\frak{s})\leq 2g(\Sigma)-2$$  for each basic class $\frak{s}$. If $b_2^+(M)= 1$, then for each basic class $\frak{s}$ with $$-\langle [\Sigma],c_1(\frak{s})\rangle+[\Sigma]\cdot [\Sigma]\geq 0,$$
$$-\langle [\Sigma],c_1(\frak{s})\rangle+[\Sigma]\cdot [\Sigma]+(2-\min(b_1(M),1))d(\frak{s})\leq 2g(\Sigma)-2,$$ where the Seiberg-Witten invariant is calculated in the chamber containing $PD[\Sigma]$.
\end{thm}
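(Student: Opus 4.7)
The plan is to follow the neck-stretching strategy behind the classical adjunction inequality, enhanced by a dimension-counting step that absorbs the $d(\frak{s})$ correction. As a first reduction I would blow up $M$ at $k=[\Sigma]\cdot[\Sigma]$ distinct points of $\Sigma$ to obtain $\widetilde M = M\#k\,\overline{\bcp^2}$ with exceptional classes $E_1,\dots,E_k$; the proper transform $\widetilde\Sigma=\Sigma-\sum E_i$ has the same genus and self-intersection zero, and the blowup formula shows that for a suitable choice of signs $\epsilon_i\in\{\pm1\}$ the Spin$^c$ structure $\widetilde{\frak{s}}$ with $c_1(\widetilde{\frak{s}})=c_1(\frak{s})+\sum\epsilon_i E_i$ is basic with $d(\widetilde{\frak{s}})=d(\frak{s})$ and $|\langle[\widetilde\Sigma],c_1(\widetilde{\frak{s}})\rangle|=|\langle[\Sigma],c_1(\frak{s})\rangle|+k$. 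Since $b_1(\widetilde M)=b_1(M)$ as well, this reduces the general case to $[\Sigma]^2=0$.

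With $[\Sigma]^2=0$, the tubular neighborhood of $\Sigma$ is $\Sigma\times D^2$, so I would equip $M$ with a family of metrics $g_T$ isometric on $Y\times[-T,T]$ to the product of a constant-curvature metric on $\Sigma$ and a round $S^1$, where $Y=\Sigma\times S^1$. The Weitzenb\"ock identity, combined with the pointwise $C^0$-bound on the plus spinor and the second Seiberg-Witten equation, forces the self-dual curvature $iF_A^+$ of any $g_T$-solution to concentrate as $T\to\infty$ in a way that, integrated against $[\Sigma]$ and combined with Gauss--Bonnet on $\Sigma$, yields the classical bound $|\langle c_1(\frak{s}),[\Sigma]\rangle|\leq 2g(\Sigma)-2$. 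Either sign is obtained by flipping the orientation of the neck.

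The refined inequality is the main content. Since $\frak{s}$ is basic, $SW_{M,\frak{s}}(U^{d(\frak{s})/2})\neq 0$, so the $d(\frak{s})$-dimensional moduli space $\frak{M}$ carries a nonzero evaluation of $\mu(U)^{d(\frak{s})/2}$, realized as $d(\frak{s})/2$ independent codimension-two cuts by geometric representatives of the first Chern class of the circle bundle $\frak{M}_o\to\frak{M}$. I would place these representatives in the neck region so that they impose explicit constraints on the asymptotic flat connection and monopole data on $\Sigma\times S^1$; each such constraint tightens the integrated Weitzenb\"ock bound on $\int_\Sigma iF_A$ by a definite integer amount. A book-keeping argument, in which each independent $H_1(M)$-direction absorbs one unit of holonomy constraint on $Y$, produces the coefficient $(2-\min(b_1(M),1))d(\frak{s})$ in the final inequality. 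The hard part---the main analytic obstacle---is to show that the $\mu(U)$-representatives can be pushed into the neck while preserving transversality of the cut-down moduli space, and that the $T\to\infty$ limit of the constraints combines with the Weitzenb\"ock bound to give the claimed integer improvement. Finally, in the $b_2^+=1$ case the chamber hypothesis $-\langle c_1(\frak{s}),[\Sigma]\rangle+[\Sigma]^2\geq 0$ is used to keep the self-dual harmonic part of $-2\pi c_1(\frak{s})+\eta$ in the chamber containing $PD[\Sigma]$, so that only the sign $-\langle c_1(\frak{s}),[\Sigma]\rangle$ of the classical bound survives and the refined version follows.
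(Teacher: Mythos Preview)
This theorem is not proved in the paper at all: it is quoted from Ozsv\'ath--Szab\'o \cite{OS2} as an input result and used as a black box in the proof of Theorem~\ref{th4}. So there is no ``paper's own proof'' to compare your proposal against.

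Taken on its own terms, your sketch has a genuine gap at exactly the point you flag as ``the hard part.'' The blow-up reduction to $[\Sigma]^2=0$ is standard and correct, and the neck-stretching/Weitzenb\"ock argument does recover the classical bound $|\langle c_1(\frak{s}),[\Sigma]\rangle|\leq 2g(\Sigma)-2$. But the mechanism you propose for the $d(\frak{s})$ refinement---pushing geometric $\mu(U)$-divisors into the neck and claiming that each one tightens an integrated curvature estimate by a definite integer---is not how the improvement actually arises, and your ``book-keeping argument'' in which ``each independent $H_1(M)$-direction absorbs one unit of holonomy constraint'' is not a proof of the coefficient $2-\min(b_1(M),1)$. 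In \cite{OS2} the refinement comes from a gluing theorem identifying the neck-side moduli with the vortex moduli space $\mathrm{Sym}^d(\Sigma)$, computing the restriction of $\mu(U)$ and $\mu$ of $H_1$-classes as explicit cohomology classes there, and then an algebraic calculation in $H^*(\mathrm{Sym}^d(\Sigma))$; the factor of $2$ versus $1$ reflects whether the symplectic basis classes $A_j,B_j$ of $\Sigma$ die in $H_1(M;\Bbb Q)$, which is automatic when $b_1(M)=0$. None of this is visible from a pointwise Weitzenb\"ock bound, and your proposal leaves the entire mechanism unspecified.
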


To state a more general version of Theorem \ref{th1}, recall that the first homology of a closed oriented surface can be viewed as a symplectic vector space given by the intersection pairing, and a basis for a symplectic vector space is called {\it symplectic} if the symplectic form takes the standard form in the basis. We define an invariant for an (embedded) oriented surface in a 4-manifold, which is a crucial tool in the present paper.
\begin{defn}
Let $M$ be a 4-manifold. For a closed oriented surface $\Sigma$ with genus $g>0$ embedded in $M$, define $l(\Sigma)$ to be the maximum of integers $l$ so that there is a symplectic basis $\{A_j,B_j\}^g_{j=1}$ in $H_1(\Sigma;\Bbb Z)$ satisfying that $i_*(A_j)=0$ in $H_1(M;\Bbb Q)$ for $j=1,\cdots,l$, where $i:\Sigma\rightarrow M$ is the inclusion map.
\end{defn}

\begin{thm}[P. Ozsv\'ath and Z. Szab\'o \cite{OS2}]\label{th2}
Let $M$ be a  smooth closed oriented 4-manifold of $b_2^+(M)>0$ and $\Sigma\subset M$ be an embedded oriented surface with genus $g(\Sigma)>0$ representing a non-torsion homology class with $\Sigma\cdot\Sigma\geq 0$.

Let $a\in \Bbb A(M)$ and $b\in \Bbb A(\Sigma)$ with degree $d(b)\leq l(\Sigma)$, and suppose $\frak{s}$ is a  Spin$^c$ structure with $SW_{M,\frak{s}}(a\cdot i_*(b))\ne 0$ (in the chamber containing $PD[\Sigma]$, if $b_2^+(M)= 1$).

If $b_2^+(M)> 1$, then $$|\langle [\Sigma],c_1(\frak{s})\rangle|+[\Sigma]\cdot [\Sigma]+2d(b)\leq 2g(\Sigma)-2.$$
If $b_2^+(M)= 1$ and $$-\langle [\Sigma],c_1(\frak{s})\rangle+[\Sigma]\cdot [\Sigma]\geq 0,$$  then
$$-\langle [\Sigma],c_1(\frak{s})\rangle+[\Sigma]\cdot [\Sigma]+2d(b)\leq 2g(\Sigma)-2.$$

Furthermore for $b$ with $d(b)>l(\Sigma)$, the similar inequalities hold with $2d(b)$ replaced with $d(b)$.
\end{thm}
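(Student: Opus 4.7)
The plan is to adapt the neck-stretching argument used to establish Theorem \ref{th1} so that it accommodates the higher-type cohomology class $\mu(a\cdot i_*(b))$. The basic idea is to stretch a long neck along the circle bundle $Y\to\Sigma$ of Euler number $[\Sigma]\cdot[\Sigma]$ bounding a tubular neighborhood $\nu(\Sigma)$. Solutions of the perturbed Seiberg-Witten equations on $M$ then break up, in the limit, into pairs of solutions on the completed cylindrical-end manifolds $M\setminus\nu(\Sigma)$ and $\nu(\Sigma)$ matched along $Y$; the class $\mu(a)$ can be supported away from the neck, while $\mu(i_*(b))$ can be arranged to concentrate near $\Sigma$.

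First, I would establish the dimension bookkeeping. Nonvanishing of $SW_{M,\frak{s}}(a\cdot i_*(b))$ forces $d(\frak{s})\geq 2(d(a)+d(b))$, so that the cut-down moduli space has nonnegative dimension. On the $\nu(\Sigma)$ side one must compute the Seiberg-Witten moduli space of the circle bundle together with its evaluation on $\mu(i_*(b))$. Via the Ozsv\'ath--Szab\'o identification of SW solutions near $\Sigma$ with symmetric products $\textrm{Sym}^k\Sigma$ (vortex picture), the cohomology classes $\mu(i_*(A_j))$ and $\mu(i_*(B_j))$ pull back to the standard generators on the Jacobian factor, and in particular the symplectic pairing of the $A_j$ and $B_j$ generators of $H_1(\Sigma;\Bbb Z)$ accounts exactly for the reduction in the expected dimension contributed by the $\Sigma$ side.

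The main obstacle, and where the invariant $l(\Sigma)$ enters decisively, is ensuring that the classes $\mu(i_*(A_j))$ actually extend to the cut-down moduli space on the $M\setminus\nu(\Sigma)$ side. When $i_*(A_j)=0$ in $H_1(M;\Bbb Q)$ for $j=1,\ldots,l(\Sigma)$, the holonomy class $\mu(i_*(A_j))$ agrees with a class supported in a neighborhood of $\Sigma$ up to a coboundary, so that each symplectic pair $\mu(i_*(A_j))\cup\mu(i_*(B_j))$ contributes a full $2$ to the degree cut. Thus for $b$ of degree $d(b)\leq l(\Sigma)$, written in the adapted symplectic basis guaranteed by the definition of $l(\Sigma)$, each monomial of $b$ consumes $2$ of the genus budget rather than only $1$, producing the $+2d(b)$ correction. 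If instead $d(b)>l(\Sigma)$, at most $l(\Sigma)$ of the $A_j$-factors can be arranged to be null-homologous, and only half of each remaining symplectic pair can be used on the surface side, so the contribution drops to $d(b)$.

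Combining this with the standard Weitzenb\"ock estimate on the $M\setminus\nu(\Sigma)$ side (which yields $|\langle [\Sigma],c_1(\frak{s})\rangle|+[\Sigma]\cdot[\Sigma]\leq 2g(\Sigma)-2$ in the absence of the cut-down) delivers
$$|\langle [\Sigma],c_1(\frak{s})\rangle|+[\Sigma]\cdot [\Sigma]+2d(b)\leq 2g(\Sigma)-2,$$
and the weaker $d(b)$-version in the remaining range. The $b_2^+(M)=1$ case runs identically once the chamber is fixed to contain $PD[\Sigma]$, since this is the sign convention under which the Weitzenb\"ock integration produces the inequality with a definite sign in front of $\langle[\Sigma],c_1(\frak{s})\rangle$. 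I expect the genuinely hard part to be the geometric identification of $\mu(i_*(A_j))$ with a compactly-supported representative near $\Sigma$ under the hypothesis $i_*(A_j)=0\in H_1(M;\Bbb Q)$, since this is what makes the symplectic pairing contribute a full $2$ rather than $1$.
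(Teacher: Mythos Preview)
This theorem is not proved in the paper at all: it is quoted from Ozsv\'ath and Szab\'o \cite{OS2} as a known input, and the paper's own contributions (Theorems~\ref{th3} and~\ref{th4}, together with Lemmas~\ref{forgiveme} and~\ref{referee}) take it as a black box. So there is no ``paper's own proof'' to compare your sketch against.

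That said, your outline is broadly in the spirit of the actual Ozsv\'ath--Szab\'o argument in \cite{OS2}: one does stretch the neck along the circle bundle over $\Sigma$, identify the moduli space on the $\nu(\Sigma)$ side with a symmetric product via the vortex description, and use a gluing/product formula for the relative invariants in which the classes $\mu(i_*(A_j)),\mu(i_*(B_j))$ are paired on the surface side. The point you flag as the hard step---that $i_*(A_j)=0$ in $H_1(M;\Bbb Q)$ lets $\mu(i_*(A_j))$ be represented by a class supported near $\Sigma$, so that a full symplectic pair $A_jB_j$ contributes $2$ rather than $1$---is indeed the mechanism behind the $2d(b)$ versus $d(b)$ dichotomy.

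Where your sketch is genuinely incomplete is in the passage from ``the cut-down moduli space is nonempty'' to the inequality. It is not a Weitzenb\"ock estimate on $M\setminus\nu(\Sigma)$ that produces the bound; rather, one needs the explicit computation of the relative invariant of the disk bundle (equivalently, of $\mathrm{Sym}^k\Sigma$) to see that the evaluation of $\mu(b)$ vanishes once $k$---which is determined by $|\langle[\Sigma],c_1(\frak{s})\rangle|+[\Sigma]\cdot[\Sigma]$---exceeds the stated threshold. Without that computation the argument has no force, and your write-up does not indicate how it would be carried out. If you intend to supply a self-contained proof rather than a citation, that is the piece you must fill in.
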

Here the inclusion map $i:\Sigma\rightarrow M$ induces a map $i_*:\Bbb A(\Sigma)\rightarrow \Bbb A(M)$ for likewise defined $\Bbb A(\Sigma)$. From now on, $i_*$ will denote the homomorphism both on homologies and $\Bbb A(\cdot)$ induced by imbedding, and $d(b)$ will denote the degree of $b\in \Bbb A(\Sigma)$.
We improve the above theorem by replacing the condition involving $l(\Sigma)$ with a condition on $[\Sigma]$.
%(The $b_1(M)=0$ case was already observed in \cite{OS2}.)
\begin{thm}\label{th3}
Let $M$ be a  smooth closed oriented 4-manifold of $b_2^+(M)>0$ and $\Sigma\subset M$ be an embedded oriented surface with genus $g(\Sigma)>0$ representing a non-torsion homology class with $[\Sigma]\cdot[\Sigma]\geq 0$.

Let $a\in \Bbb A(M)$ and $b\in \Bbb A(\Sigma)$, and suppose $\frak{s}$ is a Spin$^c$ structure with $SW_{M,\frak{s}}(a\cdot i_*(b))\ne 0$  (in the chamber containing $PD[\Sigma]$, if $b_2^+(M)= 1$).

If $b_2^+(M)> 1$ and  $$|\langle [\Sigma],c_1(\frak{s})\rangle|+[\Sigma]\cdot [\Sigma]\geq 2b_1(M),$$  then
$$|\langle [\Sigma],c_1(\frak{s})\rangle|+[\Sigma]\cdot [\Sigma]+2d(b)\leq 2g(\Sigma)-2.$$  If $b_2^+(M)= 1$ and   $$-\langle [\Sigma],c_1(\frak{s})\rangle+[\Sigma]\cdot [\Sigma]\geq 2b_1(M),$$ then
$$-\langle [\Sigma],c_1(\frak{s})\rangle+[\Sigma]\cdot [\Sigma]+2d(b)\leq 2g(\Sigma)-2.$$
\end{thm}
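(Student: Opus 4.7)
The plan is to adapt the neck-stretching strategy underlying Ozsv\'ath--Szab\'o's proof of Theorem \ref{th2}, and to show that the new hypothesis on $[\Sigma]$ and $c_1(\mathfrak{s})$ supplies exactly the dimensional room needed in the Seiberg--Witten moduli space over a tubular neighborhood of $\Sigma$ to discard the restriction $d(b)\le l(\Sigma)$.

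First I would dispose of the easy case: if $d(b)\le l(\Sigma)$ then Theorem \ref{th2} already yields the claim, so I may assume $d(b)>l(\Sigma)$. By multilinearity I would write $b$ as a sum of monomials $U_\Sigma^k\cdot\alpha_1\wedge\cdots\wedge\alpha_m$ and handle each summand separately. After stretching the neck of $\nu(\Sigma)$, the Seiberg--Witten moduli space of $M$ degenerates into a fibered product of the moduli space on $M\setminus\nu(\Sigma)$ with that on $\nu(\Sigma)$ over the translation-invariant moduli space on $\Sigma\times\mathbb{R}$. Classes in $\mathbb{A}(\Sigma)$ are then realized as cohomology classes on the $\nu(\Sigma)$-side.

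The heart of the argument is the following observation. In the Ozsv\'ath--Szab\'o framework, each $1$-cycle of $\Sigma$ null-homologous in $M$ contributes a factor of $2$ to the intersection-theoretic count on the moduli space, while each non-null-homologous cycle contributes only a factor of $1$; this is the source of the dichotomy between $2d(b)$ and $d(b)$ in Theorem \ref{th2}. The hypothesis $|\langle[\Sigma],c_1(\mathfrak{s})\rangle|+[\Sigma]\cdot[\Sigma]\ge 2b_1(M)$ should be interpreted as saying that the expected dimension of the moduli space on the $\nu(\Sigma)$-side exceeds $2b_1(M)$, which provides precisely enough slack to absorb each of the at most $b_1(M)$ independent ``bad'' directions in $H_1(\Sigma;\mathbb{Q})$ with a full factor of $2$ each. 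With this slack in place, every cycle in $b$ contributes the full factor of $2$, and the Ozsv\'ath--Szab\'o intersection-theoretic count carries through to yield $|\langle[\Sigma],c_1(\mathfrak{s})\rangle|+[\Sigma]\cdot[\Sigma]+2d(b)\le 2g(\Sigma)-2$.

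The main obstacle is making this dimension-counting argument precise: one must identify the relevant moduli-space structure on $\nu(\Sigma)$, track the $H_1(M)$-action on it, and verify that the hypothesis genuinely translates into the necessary dimensional slack rather than merely a naive parameter count. The $b_2^+(M)=1$ case additionally demands that the deformation respects the chamber containing $\mathrm{PD}[\Sigma]$; the stated hypothesis $-\langle[\Sigma],c_1(\mathfrak{s})\rangle+[\Sigma]\cdot[\Sigma]\ge 2b_1(M)$ should suffice to keep the argument inside this chamber throughout the neck-stretching limit.
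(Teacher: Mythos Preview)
Your proposal is a strategy sketch, not a proof, and you explicitly flag the gap yourself: ``The main obstacle is making this dimension-counting argument precise.'' The heuristic that the hypothesis provides ``enough slack to absorb each of the at most $b_1(M)$ independent `bad' directions \ldots\ with a full factor of $2$ each'' is not substantiated; you have not identified the relevant moduli space on $\nu(\Sigma)$, exhibited the $H_1(M)$-action, or shown that the numerical inequality $|\langle[\Sigma],c_1(\mathfrak{s})\rangle|+[\Sigma]\cdot[\Sigma]\ge 2b_1(M)$ has any geometric meaning inside the gluing picture. Without that, there is no argument.

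More to the point, the paper does not reopen the neck-stretching analysis at all. Its proof is a two-line reduction to Theorem~\ref{th2} used as a black box. The key observation is purely algebraic: for any embedded surface $F$ one has $l(F)\ge g(F)-b_1(M)$ (Lemma~\ref{referee}, proved by an elementary manipulation of symplectic bases). Combined with a handle-addition trick (Theorem~\ref{key}), the hypothesis forces $d(b)\le g(\Sigma)-b_1(M)\le l(\Sigma)$: if $d(b)>g(\Sigma)-b_1(M)$, attach $d(b)-g(\Sigma)+b_1(M)$ trivial handles to $\Sigma$ to obtain $\Sigma'$ with $g(\Sigma')=d(b)+b_1(M)$; then $d(b)\le l(\Sigma')$, so Theorem~\ref{th2} applied to $\Sigma'$ yields $|\langle[\Sigma],c_1(\mathfrak{s})\rangle|+[\Sigma]\cdot[\Sigma]+2d(b)\le 2g(\Sigma')-2$, contradicting the hypothesis. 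Once $d(b)\le l(\Sigma)$ is established, Theorem~\ref{th2} applied to $\Sigma$ itself gives the conclusion directly. So the ``easy case'' you discarded in your first paragraph is in fact the entire proof; the work lies in showing that the new hypothesis forces you into that case.
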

In case that $a=1$ and $b=U^{\frac{d(\frak{s})}{2}}$ where $U$ also denotes the positive generator of $H_0(\Sigma;\Bbb Z)$ from now on, this theorem generalizes Theorem \ref{th1} and it can be further extended to the following.
\begin{thm}\label{th4}
Let $M$ be a  smooth closed oriented 4-manifold of $b_2^+(M)>0$ and $\Sigma\subset M$ be an embedded oriented surface with genus $g(\Sigma)>0$ representing a non-torsion homology class with $[\Sigma]\cdot[\Sigma]\geq 0$. Suppose $\frak{s}$ is a basic class (in the chamber containing $PD[\Sigma]$, if $b_2^+(M)= 1$).

When $b_2^+(M)> 1$, if
\begin{eqnarray}\label{endure}
|\langle [\Sigma],c_1(\frak{s})\rangle|+3[\Sigma]\cdot [\Sigma]\geq 2b_1(M),
\end{eqnarray}
 then
$$|\langle [\Sigma],c_1(\frak{s})\rangle|+[\Sigma]\cdot [\Sigma]+2d(\frak{s})-2b_1(M)\leq 2g(\Sigma)-2.$$
When $b_2^+(M)= 1$, if
\begin{eqnarray}\label{help1}
-\langle [\Sigma],c_1(\frak{s})\rangle+3[\Sigma]\cdot [\Sigma]\geq 2b_1(M),
\end{eqnarray}
and
\begin{eqnarray}\label{help2}
-\langle [\Sigma],c_1(\frak{s})\rangle+[\Sigma]\cdot [\Sigma]\geq 0,
\end{eqnarray}
 then
$$-\langle [\Sigma],c_1(\frak{s})\rangle+[\Sigma]\cdot [\Sigma]+2d(\frak{s})-2b_1(M)\leq 2g(\Sigma)-2.$$
\end{thm}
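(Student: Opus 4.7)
The plan is to reduce Theorem \ref{th4} to Theorem \ref{th3} by choosing $a \in \Bbb A(M)$ and $b \in \Bbb A(\Sigma)$ whose product $a \cdot i_*(b)$ produces a nonzero Seiberg--Witten invariant using all $b_1(M)$ independent generators of $H_1(M)$. Set $n := b_1(M)$ and fix classes $c_1, \ldots, c_n \in H_1(M; \Bbb Z)$ whose images form a basis of $H_1(M; \Bbb Q)$.

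The central algebraic input is the non-vanishing
$$SW_{M, \frak{s}}\bigl(c_1 c_2 \cdots c_n \cdot U^{(d(\frak{s}) - n)/2}\bigr) \neq 0$$
for each basic class $\frak{s}$, up to a minor parity adjustment (for example, replacing one $c_j$ by a pair of dual $H_1(\Sigma)$ classes) when $d(\frak{s})$ and $n$ have opposite parity. I would prove this by analyzing the structure of the Seiberg--Witten moduli space $\frak{M}$: its natural projection to the Jacobian torus $T^{b_1(M)}$ identifies $\mu(c_j)$ with the pullback of the $j$-th coordinate $1$-form, so that $\mu(c_1) \wedge \cdots \wedge \mu(c_n)$ represents the Jacobian's volume form. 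A Leray--Hirsch type argument then reduces the $[\frak{M}]$-pairing to a pairing on a generic fiber, matching, up to sign, the basic invariant $SW_{M, \frak{s}}(U^{d(\frak{s})/2}) \neq 0$ that defines basic-ness.

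Setting $a := c_1 c_2 \cdots c_n \in \Bbb A(M)$ and $b := U^{(d(\frak{s}) - n)/2} \in \Bbb A(\Sigma)$, and using $i_*(U^k) = U^k$ (since $i$ sends the positive generator of $H_0(\Sigma; \Bbb Z)$ to that of $H_0(M; \Bbb Z)$), we obtain $a \cdot i_*(b) = c_1 \cdots c_n \cdot U^{(d(\frak{s}) - n)/2}$ with $d(b) = d(\frak{s}) - b_1(M)$. Applying Theorem \ref{th3} to this pair $(a, b)$ then yields exactly the inequality claimed by Theorem \ref{th4}, provided Theorem \ref{th3}'s hypothesis $|\langle [\Sigma], c_1(\frak{s})\rangle| + [\Sigma]\cdot[\Sigma] \geq 2b_1(M)$ is met.

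To bridge to the strictly weaker hypothesis \eqref{endure} (respectively \eqref{help1} together with \eqref{help2} when $b_2^+(M) = 1$), which necessarily forces $[\Sigma]\cdot[\Sigma] > 0$, I would run the preceding reduction on the interior connect sum $\tilde\Sigma$ obtained by smoothing the transverse intersections of two parallel pushoffs of $\Sigma$. The standard smoothing formulae give $[\tilde\Sigma]\cdot[\tilde\Sigma] = 4[\Sigma]\cdot[\Sigma]$, $\langle\tilde\Sigma, c_1(\frak{s})\rangle = 2\langle [\Sigma], c_1(\frak{s})\rangle$, and $g(\tilde\Sigma) = 2g(\Sigma) + [\Sigma]\cdot[\Sigma] - 1$, so Theorem \ref{th3}'s hypothesis on $\tilde\Sigma$ reads $|\langle [\Sigma], c_1(\frak{s})\rangle| + 2[\Sigma]\cdot[\Sigma] \geq b_1(M)$, which is implied by \eqref{endure}. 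Translating the resulting inequality on $\tilde\Sigma$ back to $\Sigma$, possibly iterating across $N$-fold tubings, should recover the sharp statement, with the factor $3[\Sigma]\cdot[\Sigma]$ in \eqref{endure} emerging from the quadratic scaling $[\tilde\Sigma]\cdot[\tilde\Sigma] = N^2 [\Sigma]\cdot[\Sigma]$ balanced against the linear growth of $g(\tilde\Sigma)$. In the $b_2^+(M) = 1$ case, \eqref{help2} ensures compatibility with the chamber containing $PD[\Sigma]$ at each stage. The main obstacle is the non-vanishing lemma of the first step, since extracting $SW_{M, \frak{s}}(c_1 \cdots c_n \cdot U^{(d(\frak{s}) - n)/2}) \neq 0$ from the basic-class hypothesis requires a precise description of $\frak{M}$'s cohomology when $b_1(M) > 0$; a secondary delicacy is the coefficient tracking in the tubing step to recover exactly the $3[\Sigma]\cdot[\Sigma]$ threshold rather than a weaker quadratic one.
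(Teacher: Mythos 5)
Your reduction to Theorem \ref{th3} rests on the non-vanishing claim $SW_{M,\frak{s}}(c_1\cdots c_n\cdot U^{(d(\frak{s})-n)/2})\neq 0$ for every basic class $\frak{s}$, and this is a genuine gap, not a ``minor obstacle.'' The holonomy map from $\frak{M}$ to the Jacobian torus is not in general a fibration, and there is no Leray--Hirsch splitting that converts the pairing of $\mu(c_1)\wedge\cdots\wedge\mu(c_n)\wedge\mu(U)^{(d-n)/2}$ against $[\frak{M}]$ into the fiberwise pairing computing $SW_{M,\frak{s}}(U^{d(\frak{s})/2})$. If such a formula held, the higher-type invariants $SW_{M,\frak{s}}(a\cdot i_*(b))$ with $a\neq 1$ would be determined by the basic-class condition alone, and Theorems \ref{th2} and \ref{th3} in the form stated (with a hypothesis on $SW_{M,\frak{s}}(a\cdot i_*(b))$ rather than on $SW_{M,\frak{s}}(U^{d(\frak{s})/2})$) would be pointless. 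The paper never needs any such claim: its proof of Theorem \ref{th4} blows up, taking $\hat{M}=M\# r\overline{\Bbb CP}_2$ with $r=[\Sigma]\cdot[\Sigma]$ (Lemma \ref{L1}) or $r=b_1(M)$ (Lemma \ref{L2}), uses the blow-up formula to keep $\hat{\frak{s}}$ a basic class, and applies Theorem \ref{th3} to $\hat{\Sigma}$ with $a=1$ and $b=U^{d(\hat{\frak{s}})/2}$ only; the hypothesis (\ref{endure}) is exactly what makes $|\langle[\hat\Sigma],c_1(\hat{\frak{s}})\rangle|+[\hat\Sigma]\cdot[\hat\Sigma]\geq 2b_1(M)$ after blowing up, since each blow-up adds $3$ to the pairing term and subtracts $1$ from the self-intersection. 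The remaining case $b_1(M)>d(\frak{s})/2$ is dispatched by Theorem \ref{th1}; note that in this regime your $b=U^{(d(\frak{s})-n)/2}$ is not even defined.

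The tubing step also fails quantitatively. For the smoothing $\tilde\Sigma$ of two parallel pushoffs, Theorem \ref{th3} gives $2|\langle[\Sigma],c_1(\frak{s})\rangle|+4[\Sigma]\cdot[\Sigma]+2d(b)\leq 2g(\tilde\Sigma)-2=4g(\Sigma)+2[\Sigma]\cdot[\Sigma]-4$, which upon dividing by $2$ yields only $|\langle[\Sigma],c_1(\frak{s})\rangle|+[\Sigma]\cdot[\Sigma]+d(b)\leq 2g(\Sigma)-2$: the coefficient of $d(b)$ is halved, and with $N$-fold tubing it is divided by $N$. So iterating makes the hypothesis easier but dilutes the conclusion to $|\langle[\Sigma],c_1(\frak{s})\rangle|+[\Sigma]\cdot[\Sigma]+\frac{2}{N}d(b)\leq 2g(\Sigma)-2$, which is strictly weaker than the claimed inequality precisely when $d(\frak{s})>b_1(M)$ --- the only regime in which Theorem \ref{th4} improves on Theorem \ref{th1}. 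The blow-up, by contrast, trades one unit of self-intersection for three units of $|\langle[\Sigma],c_1\rangle|$ at fixed genus, which is why it reaches the hypothesis (\ref{endure}) without any loss in the $d$-term beyond the stated $-2b_1(M)$.
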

We remark that Theorem \ref{th4} improves Theorem \ref{th1} only when $b_1(M)>0$ and $d(\frak{s})> 2$.
%For an application, we have
%\begin{cor}\label{th5}
%Let $M$ and $N$ be  smooth closed oriented 4-manifolds such that $M$ is symplectic with $b_2^+(M)>1$, and $N$ has $b_2^+(N)=0$. Let $\Sigma\subset M$ be an embedded symplectic surface with genus $g(\Sigma)>0$ and $[\Sigma]\cdot[\Sigma]\geq 0$.
%$|\langle [\Sigma],c_1(\frak{s})\rangle|+[\Sigma]\cdot [\Sigma]\geq 2b_1(M\#N)$
%Then $\Sigma$ is genus-minimizing in its homology class of $M\#N$.
%\end{cor}

\section{Some algebraic lemmas}
%We start with some lemmas which are purely algebraic.
\begin{lem}\label{loving}
Let $V$ be a symplectic vector space of dimension $2g\geq 4$ with a symplectic basis  $\{A_j,B_j\}^{g}_{j=1}$. Then for any integers $r$ and $s$, $\{A_j',B_j'\}^{g}_{j=1}$ where
$$A_1'=A_1-rA_3,\ \ \ A_2'=A_2-sA_3,\ \ \ B_3'=B_3+rB_1+sB_2,$$ and other $A_j'$ and $B_j'$ are the same as $A_j$ and $B_j$ respectively is also a symplectic basis of $V$.
\end{lem}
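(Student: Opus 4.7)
The plan is to verify directly that the proposed new collection $\{A_j',B_j'\}_{j=1}^{g}$ satisfies the three defining relations of a symplectic basis, namely
$$\omega(A_i',A_j')=0,\quad \omega(B_i',B_j')=0,\quad \omega(A_i',B_j')=\delta_{ij},$$
and that it still spans $V$. Linear independence (equivalently, spanning, since the cardinality is $2g$) is essentially free: the change-of-basis matrix is block-triangular with the $A$-block and the $B$-block each being unit upper-triangular, hence invertible. So the content lies in checking the pairings.

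First I would dispose of the ``pure'' pairings. Since every $A_j'$ is a linear combination of the original $A_1,\ldots,A_g$ only, and the original basis satisfies $\omega(A_i,A_j)=0$, bilinearity gives $\omega(A_i',A_j')=0$ for all $i,j$ at once. The same argument applied to the $B_j'$'s, which are linear combinations of the original $B_i$'s only, yields $\omega(B_i',B_j')=0$.

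Next I would tackle the mixed pairings $\omega(A_i',B_j')$. For indices where neither vector has been altered (i.e.\ $i\notin\{1,2\}$ and $j\neq 3$), the pairing is literally $\omega(A_i,B_j)=\delta_{ij}$, so nothing to check. For $i\in\{1,2\}$ with $j\neq 3$, expand $\omega(A_1-rA_3,B_j)$ and $\omega(A_2-sA_3,B_j)$: the extra terms $-r\omega(A_3,B_j)$ and $-s\omega(A_3,B_j)$ vanish because $j\neq 3$. Similarly, for $i\notin\{1,2\}$ and $j=3$, expanding $\omega(A_i,B_3+rB_1+sB_2)$ picks up only $\omega(A_i,B_3)=\delta_{i3}$ since $i\neq 1,2$.

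The only genuinely interesting checks are the two cases $\omega(A_1',B_3')$ and $\omega(A_2',B_3')$, where the alterations interact. For the first,
$$\omega(A_1-rA_3,\,B_3+rB_1+sB_2)=r\,\omega(A_1,B_1)-r\,\omega(A_3,B_3)=r-r=0,$$
all other terms vanishing by orthogonality in the original basis. The same cancellation handles $\omega(A_2',B_3')=s-s=0$, and $\omega(A_3',B_3')=\omega(A_3,B_3)=1$ is immediate. These two cancellations are precisely the reason the coefficients in $B_3'$ must be $r$ and $s$, matching those subtracted in $A_1'$ and $A_2'$; this is the only nontrivial point of the argument, and in that sense it is also the main (and very mild) ``obstacle''. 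Once these identities are recorded, the verification is complete.
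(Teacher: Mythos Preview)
Your verification is correct and is exactly the ``simple computation'' the paper alludes to; the paper's proof consists of a single sentence stating that one can check the claim directly, so you have simply supplied the details it omits. (One minor quibble: the $A$-block of the change-of-basis matrix is unit \emph{lower}-triangular rather than upper, but this does not affect invertibility, and in any case linear independence already follows from the relations $\omega(A_i',B_j')=\delta_{ij}$ you established.)
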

\begin{proof}
One can check it by a simple computation.
\end{proof}
Also note that since the above basis change is given by an integral symplectic matrix, its inverse is also an integral symplectic matrix. The following is our key lemma.
\begin{lem}\label{forgiveme}
Let $i:F\rightarrow M$ be an embedding of a closed oriented surface $F$ with genus $g>0$ into a 4-manifold $M$. Then any symplectic basis $\{A_j,B_j\}^{g}_{j=1}$ in $H_1(F;\Bbb Z)$ such that $$i_*(A_1)=\cdots =i_*(A_{l(F)})=0$$ in $H_1(M;\Bbb Q)$ satisfies that  the kernel of $$i_*:\Bbb Q\langle A_1,\cdots,A_{g} \rangle\rightarrow H_1(M;\Bbb Q)$$ has dimension $l(F)$, where  $\Bbb Q\langle A_1,\cdots,A_{g} \rangle$ is the $g$-dimensional $\Bbb Q$-vector subspace of $H_1(F;\Bbb Q)$ generated by $A_1,\cdots,A_{g}$.
\end{lem}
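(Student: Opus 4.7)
The easy direction $\dim_{\Bbb Q}\ker(i_*|_{\Bbb Q\langle A_1,\ldots,A_g\rangle})\geq l(F)$ is immediate, since $A_1,\ldots,A_{l(F)}$ are $\Bbb Q$-linearly independent elements of that kernel. The plan is to prove the reverse inequality by contradiction: if the kernel were strictly larger, I will build a new symplectic basis of $H_1(F;\Bbb Z)$ on which more than $l(F)$ of the $A$-side vectors vanish in $H_1(M;\Bbb Q)$, contradicting the maximality in the definition of $l(F)$.

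Suppose then that the kernel has $\Bbb Q$-dimension $k>l(F)$. Set $L:=\Bbb Z\langle A_1,\ldots,A_g\rangle\subset H_1(F;\Bbb Z)$ and $K:=\{\,v\in L\ :\ i_*(v)\ \text{is torsion in}\ H_1(M;\Bbb Z)\,\}$. Then $K=L\cap\ker(i_*\otimes_{\Bbb Z}\Bbb Q)$, so $K$ is a saturated, hence direct-summand, sublattice of $L$ of rank $k$, and it contains the direct summand $\Bbb Z\langle A_1,\ldots,A_{l(F)}\rangle$. By the elementary-divisors theorem applied to the chain of direct summands $\Bbb Z\langle A_1,\ldots,A_{l(F)}\rangle\subset K\subset L$, one can first complete $A_1,\ldots,A_{l(F)}$ to a $\Bbb Z$-basis of $K$, and then further to a $\Bbb Z$-basis $A_1',\ldots,A_g'$ of $L$, with $A_j'=A_j$ for $j\leq l(F)$ and $A_j'\in K$ for $l(F)+1\leq j\leq k$.

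Let $M\in GL_g(\Bbb Z)$ be the change-of-basis matrix from $\{A_j\}$ to $\{A_j'\}$, and define $B_j':=\sum_i((M^T)^{-1})_{ji}B_i$. A routine computation with the intersection pairing --- using $\langle A_i,B_j\rangle=\delta_{ij}$ together with the vanishings inside each Lagrangian half --- yields $\langle A_i',A_j'\rangle=\langle B_i',B_j'\rangle=0$ and $\langle A_i',B_j'\rangle=\delta_{ij}$, so that $\{A_j',B_j'\}_{j=1}^g$ is again a symplectic basis of $H_1(F;\Bbb Z)$. Its first $k$ Lagrangian vectors all lie in $K$ and therefore vanish in $H_1(M;\Bbb Q)$, contradicting the defining maximality $l(F)<k$.

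The main obstacle is really only that last verification: that the inverse-transpose dressing of a general unimodular change of basis on one Lagrangian half produces a symplectic basis. This is the natural generalization of Lemma~\ref{loving} (which handles one specific elementary transformation) and amounts to the matrix identity $M\cdot(M^T)^{-1\,T}=I$; the rest of the argument is purely lattice theory (saturation and extension of bases of nested direct summands in $\Bbb Z^g$) and should go through cleanly.
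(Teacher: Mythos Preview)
Your argument is correct and is a genuinely different, more structural proof than the paper's. The paper proceeds by contradiction as you do, but instead of invoking saturation and basis-extension for lattices it works with an explicit primitive vector $v=\sum_{j>l(F)} a_jA_j$ in the kernel and runs an induction on the number $N$ of nonzero coefficients: for $N=2$ a B\'ezout identity gives a hand-built symplectic replacement of two $(A,B)$-pairs, and for $N\geq 3$ one repeatedly applies the elementary move of Lemma~\ref{loving} to drive $\min_{i\ne j}\gcd(a_i,a_j)$ down until a coefficient vanishes. Your approach replaces this iterative reduction by a single lattice-theoretic step (direct-summand extension through the chain $\Bbb Z\langle A_1,\dots,A_{l(F)}\rangle\subset K\subset L$) together with the general fact that any unimodular change of the $A$-basis extends to a symplectic change via the inverse-transpose on the $B$-side; this is cleaner and makes Lemma~\ref{loving} unnecessary, at the cost of appealing to standard structure theory of finitely generated free abelian groups rather than exhibiting everything by hand. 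One minor point: you use $M$ both for the ambient $4$-manifold and for your change-of-basis matrix, so rename the latter when you write this up.
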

\begin{proof}
Let $\{A_j,B_j\}^{g}_{j=1}$ be a symplectic basis in $H_1(F;\Bbb Z)$, which realizes $l(F)$, i.e. $i_*(A_1)=\cdots =i_*(A_{l(F)})=0$ in $H_1(M;\Bbb Q)$.

When $g=1$, if $i_*(cA_1)$ for $c\in \Bbb Q-\{0\}$ is zero in $H_1(M;\Bbb Q)$, then so is  $i_*(A_1)$, and hence $\dim\ker i_*=l(F)$.

When $g\geq 2$, assume to the contrary that there exists a nonzero vector $v\in \Bbb Q\langle A_1,\cdots,A_{g} \rangle$ generated by $A_{l(F)+1},\cdots,A_{g}$ such that $i_*(v)=0$ in $H_1(M;\Bbb Q)$. By multiplying a rational number, if necessary, we may let $$v=\sum_{j=l(F)+1}^g a_jA_j$$ where $a_j$'s are integers such that their greatest common divisor is 1.

Now let's call the number of nonzero $a_j$'s $N$. The $N=1$ case is immediately excluded, because it is a contradiction to the definition of $l(F)$ as the maximum of $l$'s.

In the $N=2$ case, let's say $v=a_mA_m+a_nA_n$ for $\gcd(a_m,a_n)=1$. Take integers $p$ and $q$ such that $pa_m+qa_n=1$, and we modify the above symplectic basis by replacing $A_m,B_m,A_n,B_n$ with $$A_m'=v,\ \ \ \  B_m'=pB_m+qB_n,$$ $$A_n'=pA_n-qA_m,\ \ \ \  B_n'=a_mB_n-a_nB_m$$ respectively. One can easily check this new basis is still symplectic. But the fact that $i_*(A_m')$ is zero in $H_1(M;\Bbb Q)$ along with $i_*(A_j)$ for $j=1,\cdots,l(F)$ is again contradictory to the definition of $l(F)$.

For the higher $N$ cases, we will use induction on $N$. Suppose that $N\leq k$ cases lead to contradictions, and we need to prove for the $N=k+1$ case. Let's re-denote those nonzero $a_j$'s by $a_m, a_{m+1},\cdots, a_{m+k}$ such that $\gcd(a_m,a_{m+1})$ has the smallest value among all possible $\gcd(a_i,a_j)$ for $i\ne j$. There exist integers $r$ and $s$ such that $a_{m+2}':=a_{m+2}+ra_m+sa_{m+1}$ satisfies
\begin{eqnarray}\label{onlyword}
0\leq a_{m+2}'\leq \gcd(a_m,a_{m+1})-1.
\end{eqnarray}
Then we modify the symplectic basis by replacing $A_{m},A_{m+1},B_{m+2}$ with
$$A_m'=A_m-rA_{m+2}, \ \ \ \ A_{m+1}'=A_{m+1}-sA_{m+2},$$  $$B_{m+2}'=B_{m+2}+rB_m+sB_{m+1}$$ respectively.
By Lemma \ref{loving}, this new basis is symplectic, and $v$ can be expressed as $$v=a_mA_m'+a_{m+1}A_{m+1}'+a_{m+2}'A_{m+2}+\cdots$$ where $\cdots$ terms are the same as before. If $a_{m+2}'=0$, then it is reduced to the $N=k$ case, and otherwise we keep doing this process finite times until we make certain $a_j$ zero, because  (\ref{onlyword}) implies that $\min_{i\ne j}\gcd(a_i,a_j)$ is reduced at least by 1 whenever performing this symplectic basis change. This completes the proof.
\end{proof}
%\begin{rmk}
%If there exist certain two coprime $a_j$'s, let's say  that  $\gcd(a_m,a_{m+1})=1$ just by renumbering. We take integers $p$ and $q$ such that $pa_m+qa_{m+1}=1$, and modify the symplectic basis as follows. We replace $A_m,A_{m+1},B_{m+j}$ for $j=2,\cdots,k$ by $$A_m'=A_m+p\sum_{j=2}^ka_{m+j}A_{m+j},\ \ \ \ A_{m+1}'=A_{m+1}+q\sum_{j=2}^ka_{m+j}A_{m+j},$$ $$B_{m+j}'=B_{m+j}-pa_{m+j}B_{m}-qa_{m+j}B_{m+1}$$ respectively. Then one can easily check that this gives a new symplectic basis, and $v$ can be expressed in this new basis as
%\begin{eqnarray*}
%v&=&a_mA_m+a_{m+1}A_{m+1}+\cdots +a_{m+k}A_{m+k}\\ &=& a_mA_m'-pa_m\sum_{j=2}^ka_{m+j}A_{m+j}+a_{m+1}A_{m+1}'-qa_{m+1}\sum_{j=2}^ka_{m+j}A_{m+j}\\ & & +\sum_{j=2}^ka_{m+j}A_{m+j}\\ &=& a_mA_m'+a_{m+1}A_{m+1}'.
%\end{eqnarray*}
%Thus it is reduced to the $N=2$ case which yields a contradiction.
%\end{rmk}
The above lemma can be rephrased as the following.
\begin{lem}\label{referee}
Under the assumptions of Lemma \ref{forgiveme},  $$g-b_1(M)\leq l(F).$$
\end{lem}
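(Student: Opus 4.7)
The plan is to derive Lemma \ref{referee} as an immediate consequence of Lemma \ref{forgiveme} via rank-nullity applied to the restriction of $i_*$ to the Lagrangian subspace $\mathbb{Q}\langle A_1,\ldots,A_g\rangle$. Concretely, I would first fix a symplectic basis $\{A_j,B_j\}_{j=1}^g$ of $H_1(F;\mathbb{Z})$ that realizes the maximum $l(F)$, so that $i_*(A_1)=\cdots=i_*(A_{l(F)})=0$ in $H_1(M;\mathbb{Q})$. Then I would invoke Lemma \ref{forgiveme}, which guarantees that $\ker\bigl(i_*|_{\mathbb{Q}\langle A_1,\ldots,A_g\rangle}\bigr)$ has dimension exactly $l(F)$ (rather than merely at least $l(F)$).

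Since $\mathbb{Q}\langle A_1,\ldots,A_g\rangle$ is $g$-dimensional, rank-nullity gives that the image of this restricted map has dimension $g-l(F)$. But this image sits inside $H_1(M;\mathbb{Q})$, which has dimension $b_1(M)$, so $g-l(F)\leq b_1(M)$, that is, $l(F)\geq g-b_1(M)$, as claimed. There is no real obstacle here beyond recognizing that the precise equality in the kernel dimension established in Lemma \ref{forgiveme} is the content needed; the naive bound using only the definition of $l(F)$ would give $\dim\ker\geq l(F)$, which is not enough to produce an upper bound on the image's dimension.
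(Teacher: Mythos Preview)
Your proposal is correct and follows essentially the same approach as the paper: fix a symplectic basis realizing $l(F)$, apply Lemma \ref{forgiveme} to obtain $\dim\ker\bigl(i_*|_{\mathbb{Q}\langle A_1,\ldots,A_g\rangle}\bigr)=l(F)$, and then use rank--nullity to deduce $g-l(F)=\dim\operatorname{Im}(i_*)\leq b_1(M)$. Your closing remark that the exact equality of the kernel dimension (rather than the trivial lower bound) is the essential input from Lemma \ref{forgiveme} is precisely the point.
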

\begin{proof}
For a symplectic basis in $H_1(F;\Bbb Z)$ realizing $l(F)$, Lemma \ref{forgiveme} dictates that
 the homomorphism $i_*:\Bbb Q\langle A_1,\cdots,A_g \rangle\rightarrow H_1(M;\Bbb Q)$ satisfies
$$g-l(F)=\dim(\textrm{Im} (i_*))\leq b_1(M).$$
\end{proof}

\begin{thm}\label{key}
Let $M$ be a  smooth closed oriented 4-manifold of $b_2^+(M)>0$ and $\Sigma\subset M$ be an embedded oriented surface with genus $g(\Sigma)>0$ representing a non-torsion homology class with $[\Sigma]\cdot[\Sigma]\geq 0$.

Let $a\in \Bbb A(M)$ and $b\in \Bbb A(\Sigma)$, and suppose $\frak{s}$ is a Spin$^c$ structure with $SW_{M,\frak{s}}(a\cdot i_*(b))\ne 0$  (in the chamber containing $PD[\Sigma]$, if $b_2^+(M)= 1$).

Suppose that $$|\langle [\Sigma],c_1(\frak{s})\rangle|+[\Sigma]\cdot [\Sigma]\geq 2b_1(M),$$ when $b_2^+(M)> 1$, and $$-\langle [\Sigma],c_1(\frak{s})\rangle+[\Sigma]\cdot [\Sigma]\geq 2b_1(M),$$ when $b_2^+(M)= 1$.

Then $$d(b)\leq g(\Sigma)-b_1(M).$$
\end{thm}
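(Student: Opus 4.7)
The plan is to argue by contradiction: assume $d(b) > g(\Sigma) - b_1(M)$ and derive an impossibility. The bridge between $d(b)$ and $l(\Sigma)$ is Lemma \ref{referee}, which gives $l(\Sigma) \geq g(\Sigma) - b_1(M)$, so the assumption partitions into the two subcases $g(\Sigma) - b_1(M) < d(b) \leq l(\Sigma)$ and $d(b) > l(\Sigma)$.

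The first subcase is quickly closed: since $d(b) \leq l(\Sigma)$, Theorem \ref{th2} in its strong form delivers $|\langle [\Sigma], c_1(\frak{s})\rangle| + [\Sigma]\cdot[\Sigma] + 2 d(b) \leq 2 g(\Sigma) - 2$ (with the analogous inequality in the $b_2^+ = 1$ chamber), and combining with the hypothesis $|\langle [\Sigma], c_1(\frak{s})\rangle| + [\Sigma]\cdot[\Sigma] \geq 2 b_1(M)$ yields $d(b) \leq g(\Sigma) - b_1(M) - 1$, contradicting the assumption.

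The substantive subcase is $d(b) > l(\Sigma)$. Naively, the weak form of Theorem \ref{th2} combined with the hypothesis only yields $d(b) \leq 2 g(\Sigma) - 2 - 2 b_1(M)$, which is compatible with $d(b) > g(\Sigma) - b_1(M)$ once $g(\Sigma) > b_1(M) + 2$; a genuine refinement is therefore needed. I would fix a symplectic basis $\{A_j, B_j\}_{j=1}^{g(\Sigma)}$ of $H_1(\Sigma; \Bbb Z)$ realizing $l(\Sigma)$ and split $b = b_0 + b_1$, where $b_1$ lies in the ideal of $\Bbb A(\Sigma)$ generated by the dying classes $A_1, \ldots, A_{l(\Sigma)}$ and $b_0$ has no such factors, so that $i_*(b) = i_*(b_0)$ in $\Bbb A(M; \Bbb Q)$ and the Seiberg--Witten pairing depends only on $b_0$. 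Lemma \ref{forgiveme} then supplies that $i_*(A_{l(\Sigma)+1}), \ldots, i_*(A_{g(\Sigma)})$ are $\Bbb Q$-linearly independent in $H_1(M;\Bbb Q)$ while the full image of $i_*$ has $\Bbb Q$-rank at most $b_1(M)$. Using the symplectic basis changes of Lemma \ref{loving}, I would iteratively absorb the $1$-class factors of each surviving monomial of $b_0$ into the dying ideal; in view of the maximality of $l(\Sigma)$, this must eventually force $b_0$ to be equivalent to an element of effective degree at most $l(\Sigma)$, returning the analysis to the benign subcase.

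The main obstacle I anticipate is the interplay between the $A$-factors in $b_0$ (whose images are independent) and the $B$-factors (which impose the rank-$b_1(M)$ constraint on the image of $i_*$): a successful reduction must coordinate the symplectic basis changes of Lemma \ref{loving} with the wedge-product structure of each monomial of $b_0$ delicately, and the hypothesis $|\langle [\Sigma], c_1(\frak{s})\rangle| + [\Sigma]\cdot[\Sigma] \geq 2 b_1(M)$ should enter as exactly the quantitative slack needed to drive the recursion to termination.
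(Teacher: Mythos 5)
Your first subcase ($d(b)\le l(\Sigma)$) is fine, and in fact needs no contradiction: the strong form of Theorem \ref{th2} together with the hypothesis immediately yields $2b_1(M)+2d(b)\le 2g(\Sigma)-2$, i.e.\ $d(b)\le g(\Sigma)-b_1(M)-1$. The genuine gap is in the substantive subcase $d(b)>l(\Sigma)$. The proposed reduction --- splitting $b=b_0+b_1$ with $b_1$ in the ideal of the dying classes and then ``absorbing'' factors of $b_0$ until its ``effective degree'' drops to at most $l(\Sigma)$ --- is not an argument: $b$ is homogeneous of degree $d(b)$, the basis changes of Lemma \ref{loving} preserve degree, and the maximality of $l(\Sigma)$ provides no mechanism for replacing $b_0$ by something of strictly smaller degree. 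There is also no visible way for the hypothesis $|\langle [\Sigma],c_1(\frak{s})\rangle|+[\Sigma]\cdot[\Sigma]\ge 2b_1(M)$ to drive such an algebraic recursion to termination; you yourself flag this as the ``main obstacle,'' so what you have is a restatement of the difficulty rather than a resolution of it.

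The paper's proof sidesteps this entirely with a stabilization trick: assuming $d(b)>g(\Sigma)-b_1(M)$, add $d(b)-g(\Sigma)+b_1(M)$ topologically trivial handles to $\Sigma$, obtaining $\Sigma'$ with $[\Sigma']=[\Sigma]$ and $g(\Sigma')=d(b)+b_1(M)$. Because the new handles contribute nullhomologous curves, Lemma \ref{referee} gives $l(\Sigma')\ge g(\Sigma')-b_1(M)=d(b)$, so the \emph{strong} form of Theorem \ref{th2} applies to $\Sigma'$ with the same $b$ (via the natural injection $\Bbb A(\Sigma)\hookrightarrow\Bbb A(\Sigma')$). Since $[\Sigma']=[\Sigma]$, the adjunction quantity is unchanged, and the hypothesis gives
$$|\langle [\Sigma'],c_1(\frak{s})\rangle|+[\Sigma']\cdot[\Sigma']+2d(b)\ \ge\ 2b_1(M)+2d(b)\ =\ 2g(\Sigma')\ >\ 2g(\Sigma')-2,$$
a contradiction. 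In short, rather than trying to push $d(b)$ down below $l(\Sigma)$, the correct move is to push $l$ up by enlarging the surface until the strong inequality becomes applicable; without this idea (or some substitute for it) your proof of the hard case does not go through.
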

\begin{proof}
Let's first consider the $b_2^+(M)>1$ case.
Assume to the contrary that $d(b)> g(\Sigma)-b_1(M)$. Let $\Sigma'\subset M$ be an embedded oriented surface obtained by adding $d(b)-g(\Sigma)+b_1(M)$ topologically trivial handles to $\Sigma$ so that $[\Sigma']=[\Sigma]$, and $g(\Sigma')=d(b)+b_1(M)$. Moreover $\Bbb A(\Sigma)$ naturally injects into $\Bbb A(\Sigma')$, and  $$d(b)=g(\Sigma')-b_1(M)\leq l(\Sigma')$$ by Lemma \ref{referee}.

Now we have
\begin{eqnarray*}
|\langle [\Sigma'],c_1(\frak{s})\rangle|+[\Sigma']\cdot [\Sigma']+2d(b)&=& |\langle [\Sigma],c_1(\frak{s})\rangle|+[\Sigma]\cdot [\Sigma]+2d(b)\\ &\geq& 2b_1(M)+2d(b)\\ &=& 2g(\Sigma')\\ &>& 2g(\Sigma')-2,
\end{eqnarray*}
which is a contradiction to Theorem \ref{th2} applied to $\Sigma'$.
Therefore $$d(b)\leq g(\Sigma)-b_1(M).$$

The case $b_2^+(M)=1$ can be proved in the same way as above by replacing $|\cdot|$ with a minus sign in  $|\langle [\Sigma],c_1(\frak{s})\rangle|$ and $|\langle [\Sigma'],c_1(\frak{s})\rangle|$.
%In this case we need the additional condition $-\langle [\Sigma],c_1(\frak{s})\rangle+[\Sigma]\cdot [\Sigma]\geq 0$ as requested in Theorem \ref{th2}.
\end{proof}

\begin{rmk}
Note that
$|\langle [\Sigma],c_1(\frak{s})\rangle|+[\Sigma]\cdot [\Sigma]$ is even for any Spin$^c$ structure $\frak{s}$ and any closed surface $\Sigma$ by the Wu formula.
\end{rmk}

%Finally we are ready to prove our main theorem.
%\begin{thm}
%Let $M$ be a smooth closed oriented simply connected 4-manifold with $b_2^+(M)\geq 2$. Suppose $\frak{s}$ is a basic class of $M$. Then $d(\frak{s})=0$.
%\end{thm}
%\begin{proof}
%By the Hurewicz theorem, $\pi_1(M)=0$ implies that $\pi_2(M)$ is isomorphic to $H_2(M;\Bbb Z)$, which is isomorphic to $\Bbb Z^{b_2(M)}$. Thus any element of $H_2(M;\Bbb Z)$ can be represented as an immersed sphere in $M$. Since $b_2^+(M)\geq 1$, one can take a immersed sphere $F$ in $M$ with positive self-intersection. We can arrange that its self-crossing is transversal, and hence we can find an embedded sphere homologous to $F$ by resolving the intersection points. We now glue a small topologically trivial handle to this, and get an embedded torus $\Sigma$ homologous to $F$.

%Now we have $SW_{M,\frak{s}}(a\cdot i_*(b))\ne 0$ for $a=1$ and $b=U^{\frac{d(\frak{s})}{2}}$. Applying Theorem \ref{key} to $\Sigma$, we get $d(b)\leq 1$. Since $d(b)=d(\frak{s})$ has to be even, we conclude that $d(\frak{s})=0$.
%\end{proof}

\section{Proof of Theorem \ref{th3}}
By Lemma \ref{referee} and Theorem \ref{key}, we have $$d(b)\leq g(\Sigma)-b_1(M)\leq l(\Sigma).$$
Then the application of Theorem \ref{th2} gives the desired result.

\section{Proof of Theorem \ref{th4}}
\setcounter{equation}{0}

\begin{lem}\label{L1}
If an additional condition $[\Sigma]\cdot [\Sigma]\leq \min(b_1(M),\frac{d(\frak{s})}{2})$ is satisfied, then the desired adjunction inequalities hold.
\end{lem}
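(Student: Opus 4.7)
The plan is to reduce Lemma \ref{L1} to Theorem \ref{th3} via a blow-up construction: I will blow up $M$ at $[\Sigma]\cdot[\Sigma]$ distinct points of $\Sigma$ and apply Theorem \ref{th3} to the blow-up, with a Spin$^c$ structure chosen so that the hypothesis $[\Sigma]\cdot[\Sigma]\leq d(\frak{s})/2$ keeps it basic while $[\Sigma]\cdot[\Sigma]\leq b_1(M)$ drives the closing arithmetic.

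Write $n = [\Sigma]\cdot[\Sigma]$ and form $\widetilde{M} = M\#n\overline{\bcp^2}$ with exceptional spheres $E_1,\ldots,E_n$ lying over $n$ distinct points of $\Sigma$; let $\widetilde{\Sigma}$ be the proper transform of $\Sigma$, so that $[\widetilde{\Sigma}] = \pi^*[\Sigma] - \sum_i E_i$, $g(\widetilde{\Sigma}) = g(\Sigma)$, and $[\widetilde{\Sigma}]\cdot[\widetilde{\Sigma}] = 0$. Equip $\widetilde{M}$ with the Spin$^c$ structure $\widetilde{\frak{s}}$ determined by $c_1(\widetilde{\frak{s}}) = c_1(\frak{s}) + 3\sigma\sum_i E_i^*$, where $\sigma\in\{\pm 1\}$ is chosen so that $\sigma\langle [\Sigma],c_1(\frak{s})\rangle\geq 0$ when $b_2^+(M)>1$ (forcing $|\langle [\widetilde{\Sigma}],c_1(\widetilde{\frak{s}})\rangle| = |\langle [\Sigma],c_1(\frak{s})\rangle| + 3n$), and $\sigma = -1$ when $b_2^+(M)=1$ (forcing $-\langle [\widetilde{\Sigma}],c_1(\widetilde{\frak{s}})\rangle = -\langle [\Sigma],c_1(\frak{s})\rangle + 3n$). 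A direct computation yields $d(\widetilde{\frak{s}}) = d(\frak{s}) - 2n$, which is nonnegative by hypothesis, and the Fintushel--Stern blow-up formula produces
$$SW_{\widetilde{M},\widetilde{\frak{s}}}(U^{d(\widetilde{\frak{s}})/2}) = \pm SW_{M,\frak{s}}(U^{d(\widetilde{\frak{s}})/2 + n}) = \pm SW_{M,\frak{s}}(U^{d(\frak{s})/2})\neq 0,$$
so $\widetilde{\frak{s}}$ is a basic class of $\widetilde{M}$.

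Now apply Theorem \ref{th3} to $(\widetilde{M},\widetilde{\Sigma},\widetilde{\frak{s}})$ with $a=1$ and $b = U^{d(\widetilde{\frak{s}})/2}$. When $b_2^+(M)>1$, the Theorem \ref{th3} hypothesis is precisely (\ref{endure}), and its conclusion simplifies to $|\langle [\Sigma],c_1(\frak{s})\rangle| + 2d(\frak{s}) - n \leq 2g(\Sigma)-2$; invoking $n\leq b_1(M)$ then gives $|\langle [\Sigma],c_1(\frak{s})\rangle| + n + 2d(\frak{s}) - 2b_1(M) \leq |\langle [\Sigma],c_1(\frak{s})\rangle| + 2d(\frak{s}) - n \leq 2g(\Sigma)-2$, as required. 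The $b_2^+(M)=1$ case goes in parallel, with (\ref{help1}) supplying the Theorem \ref{th3} hypothesis and (\ref{help2}) together with $n\geq 0$ giving $-\langle [\widetilde{\Sigma}],c_1(\widetilde{\frak{s}})\rangle + [\widetilde{\Sigma}]\cdot[\widetilde{\Sigma}] = -\langle [\Sigma],c_1(\frak{s})\rangle + 3n \geq 0$, which fixes the chamber on $\widetilde{M}$ in which the blow-up formula is to be applied. The one technical point requiring care is this chamber bookkeeping when $b_2^+(M)=1$, which can be handled by choosing a metric on $\widetilde{M}$ with small exceptional spheres so that its self-dual periods lie close to those of $M$.
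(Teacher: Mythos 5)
Your proposal is correct and follows essentially the same route as the paper: blow up at $r=[\Sigma]\cdot[\Sigma]$ points, take the proper transform and the Spin$^c$ structure with $c_1$ shifted by $\pm 3$ times each exceptional class so that $[\hat{\Sigma}]\cdot[\hat{\Sigma}]=0$ and $d(\hat{\frak{s}})=d(\frak{s})-2r\geq 0$, verify the hypothesis of Theorem \ref{th3} via (\ref{endure}) (resp.\ (\ref{help1})) and the blow-up formula, and close with $r\leq b_1(M)$. The only cosmetic differences are that you toggle a sign $\sigma$ where the paper replaces $[\Sigma]$ by $-[\Sigma]$, and you spell out the chamber bookkeeping slightly more explicitly.
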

\begin{proof}
We use the blow-up technique as in \cite{kim, OS1, OS2}. Take $\hat{M}=M\# r\overline{\Bbb CP}_2$ for $r\geq 0$ and let $\hat{\Sigma}$ be the ``proper transform'' of $\Sigma$ so that $$[\hat{\Sigma}]=[\Sigma]-E_1-\cdots -E_r,$$ where $E_i$'s are the classes of exceptional spheres.

Let  $\hat{\frak{s}}$ be the Spin$^c$ structure on $\hat{M}$ which agrees with $\frak{s}$ in the complement of exceptional spheres and has 1st Chern class $$c_1(\hat{\frak{s}})=c_1(\frak{s})-3\sum_{i=1}^r \textrm{PD}[E_i].$$ By simple computations, $$[\hat{\Sigma}]\cdot [\hat{\Sigma}]=[\Sigma]\cdot[\Sigma]-r,$$ and $$d(\hat{\frak{s}})= d(\frak{s})-2r.$$

First, let's prove for the $b_2^+(M)>1$ case. Without loss of generality we may assume
$\langle [\Sigma],c_1(\frak{s})\rangle\leq 0$ by replacing $[\Sigma]$ with $-[\Sigma]$ if necessary.
Then
$$|\langle [\hat{\Sigma}],c_1(\hat{\frak{s}})\rangle|=|\langle [\Sigma],c_1(\frak{s})\rangle|+3r,$$ and
\begin{eqnarray}\label{love}
\\
|\langle [\hat{\Sigma}],c_1(\hat{\frak{s}})\rangle|+[\hat{\Sigma}]\cdot [\hat{\Sigma}]+2d(\hat{\frak{s}})&=&|\langle [\Sigma],c_1(\frak{s})\rangle|+[\Sigma]\cdot[\Sigma]+2d(\frak{s})-2r.\nonumber
\end{eqnarray}
 If we take $r=[\Sigma]\cdot[\Sigma]$, then $[\hat{\Sigma}]\cdot [\hat{\Sigma}]\geq 0$, $d(\hat{\frak{s}})\geq 0$, and
\begin{eqnarray*}
|\langle [\hat{\Sigma}],c_1(\hat{\frak{s}})\rangle|+[\hat{\Sigma}]\cdot [\hat{\Sigma}]&=&
|\langle [\Sigma],c_1(\frak{s})\rangle|+3[\Sigma]\cdot[\Sigma]\\ &\geq& 2b_1(M).
\end{eqnarray*}
By the well-known blow-up formula \cite{FS, sal} of Seiberg-Witten invariants,$$SW_{\hat{M},\hat{\frak{s}}}(U^{\frac{d(\hat{\frak{s}})}{2}})=SW_{M,\frak{s}}(U^{\frac{d(\frak{s})}{2}})\ne 0,$$ and hence $\hat{\frak{s}}$ is a basic class. We can now apply Theorem \ref{th3} to $\hat{M}$ with $a=1$ and $b=U^{\frac{d(\hat{\frak{s}})}{2}}$ to obtain
$$|\langle [\hat{\Sigma}],c_1(\hat{\frak{s}})\rangle|+[\hat{\Sigma}]\cdot [\hat{\Sigma}]+2d(\hat{\frak{s}})\leq 2g(\hat{\Sigma})-2= 2g(\Sigma)-2.$$
Combining this with (\ref{love}) and the assumption $b_1(M)\geq [\Sigma]\cdot[\Sigma]=r$, we get the desired adjunction inequality.

The proof for the $b_2^+(M)=1$ case proceeds in the same way as above by replacing $|\cdot|$ with a minus sign in  $|\langle [\hat{\Sigma}],c_1(\hat{\frak{s}})\rangle|$ and $|\langle [\Sigma],c_1(\frak{s})\rangle|$.
In this case, the blow-up formula says that $SW_{M,\frak{s}}(U^{\frac{d(\frak{s})}{2}})$ calculated in the chamber containing $\textrm{PD}[\Sigma]$ is equal to  $SW_{\hat{M},\hat{\frak{s}}}(U^{\frac{d(\hat{\frak{s}})}{2}})$ calculated in the chamber containing $\textrm{PD}[\hat{\Sigma}]$, which is what we need for the application of Theorem \ref{th3}.
%But note that for the application of Theorem \ref{th3}, the additional condition
%\begin{eqnarray}\label{thank}
% -\langle [\hat{\Sigma}],c_1(\hat{\frak{s}})\rangle+[\hat{\Sigma}]\cdot[\hat{\Sigma}]\geq 0
%\end{eqnarray}
%is needed, and indeed the LHS is equal to
%$$ -\langle [\Sigma],c_1(\frak{s})\rangle+[\Sigma]\cdot[\Sigma]+2r = -\langle[\Sigma],c_1(\frak{s})\rangle+3[\Sigma]\cdot[\Sigma]$$ which is nonnegative by (\ref{help1}).
\end{proof}

\begin{lem}\label{L2}
If an additional condition $b_1(M)\leq \min([\Sigma]\cdot [\Sigma],\frac{d(\frak{s})}{2})$ is satisfied, then the desired adjunction inequalities hold.
\end{lem}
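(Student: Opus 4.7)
The plan is to adapt the blow-up technique used in the proof of Lemma \ref{L1}, but tuning the number $r$ of blow-ups to fit the complementary regime. Specifically, I would take $r=b_1(M)$, form $\hat M = M\# r\overline{\Bbb CP}_2$, and take $\hat\Sigma$ (the proper transform, with $[\hat\Sigma] = [\Sigma] - E_1 - \cdots - E_r$ and $g(\hat\Sigma) = g(\Sigma)$) together with $\hat{\frak{s}}$ satisfying $c_1(\hat{\frak{s}}) = c_1(\frak{s}) - 3\sum_{i=1}^r \textrm{PD}[E_i]$, exactly as in Lemma \ref{L1}.

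Assuming first that $b_2^+(M) > 1$ and, without loss of generality, $\langle[\Sigma],c_1(\frak{s})\rangle \leq 0$, the same computations already carried out in Lemma \ref{L1} yield $|\langle[\hat\Sigma],c_1(\hat{\frak{s}})\rangle| = |\langle[\Sigma],c_1(\frak{s})\rangle| + 3b_1(M)$, $[\hat\Sigma]\cdot[\hat\Sigma] = [\Sigma]\cdot[\Sigma] - b_1(M)$, and $d(\hat{\frak{s}}) = d(\frak{s}) - 2b_1(M)$. The two hypotheses of Lemma \ref{L2} supply the crucial nonnegativities $[\hat\Sigma]\cdot[\hat\Sigma]\geq 0$ and $d(\hat{\frak{s}})\geq 0$, and a direct computation shows
\begin{eqnarray*}
|\langle[\hat\Sigma],c_1(\hat{\frak{s}})\rangle| + [\hat\Sigma]\cdot[\hat\Sigma] &=& |\langle[\Sigma],c_1(\frak{s})\rangle| + [\Sigma]\cdot[\Sigma] + 2b_1(M)\\ &\geq& 2b_1(M) = 2b_1(\hat M),
\end{eqnarray*}
which is precisely the hypothesis of Theorem \ref{th3}. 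The blow-up formula for Seiberg-Witten invariants then gives $SW_{\hat M,\hat{\frak{s}}}(U^{\frac{d(\hat{\frak{s}})}{2}}) = SW_{M,\frak{s}}(U^{\frac{d(\frak{s})}{2}}) \neq 0$, so $\hat{\frak{s}}$ is basic on $\hat M$.

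Next I would apply Theorem \ref{th3} to $(\hat M,\hat\Sigma,\hat{\frak{s}})$ with $a=1$ and $b = U^{\frac{d(\hat{\frak{s}})}{2}}$ (so that $d(b) = d(\hat{\frak{s}})$), obtaining
$$|\langle[\hat\Sigma],c_1(\hat{\frak{s}})\rangle| + [\hat\Sigma]\cdot[\hat\Sigma] + 2d(\hat{\frak{s}}) \leq 2g(\hat\Sigma)-2,$$
which, after substituting the formulas above and using $g(\hat\Sigma) = g(\Sigma)$, collapses to exactly
$$|\langle[\Sigma],c_1(\frak{s})\rangle| + [\Sigma]\cdot[\Sigma] + 2d(\frak{s}) - 2b_1(M) \leq 2g(\Sigma) - 2,$$
the desired adjunction inequality for $b_2^+(M)>1$.

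The $b_2^+(M)=1$ case runs in parallel, with each $|\langle\cdot,\cdot\rangle|$ replaced by $-\langle\cdot,\cdot\rangle$ (no sign-flip WLOG is available here), and with the chamber containing $\textrm{PD}[\Sigma]$ matched to the chamber containing $\textrm{PD}[\hat\Sigma]$ under the blow-up formula, exactly as in the proof of Lemma \ref{L1}. Here (\ref{help2}) both ensures $-\langle[\hat\Sigma],c_1(\hat{\frak{s}})\rangle + [\hat\Sigma]\cdot[\hat\Sigma] \geq 2b_1(M)$ and provides the chamber hypothesis of Theorem \ref{th3} on $\hat M$, while (\ref{help1}) is in fact redundant given $b_1(M)\leq [\Sigma]\cdot[\Sigma]$ and (\ref{help2}). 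I do not foresee any real obstacle: the only conceptual point is that the choice $r = b_1(M)$ is precisely what makes the dimension shift $d(\frak{s}) \mapsto d(\frak{s}) - 2b_1(M)$ absorb the $-2b_1(M)$ correction needed in Theorem \ref{th4}; everything else is bookkeeping.
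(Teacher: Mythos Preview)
Your proposal is correct and follows essentially the same route as the paper: blow up $r=b_1(M)$ times, verify that the resulting $\hat\Sigma$ and $\hat{\frak{s}}$ satisfy the hypotheses of Theorem~\ref{th3} (using $b_1(M)\leq [\Sigma]\cdot[\Sigma]$ and $b_1(M)\leq \frac{d(\frak{s})}{2}$ for the nonnegativities, and the computation $|\langle[\hat\Sigma],c_1(\hat{\frak{s}})\rangle|+[\hat\Sigma]\cdot[\hat\Sigma]=|\langle[\Sigma],c_1(\frak{s})\rangle|+[\Sigma]\cdot[\Sigma]+2b_1(M)\geq 2b_1(\hat M)$), invoke the blow-up formula, and unwind via (\ref{love}). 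Your observation that (\ref{endure}) and (\ref{help1}) are not needed here also matches the paper's remark.
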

\begin{proof}
For this lemma, we need neither (\ref{endure}) nor  (\ref{help1}).
The proof is similar to the previous lemma, and we adopt the same notation. Here we will take $r$ to be $b_1(M)$.

First let's consider the $b_2^+(M)>1$ case, and without loss of generality we may assume
$\langle [\Sigma],c_1(\frak{s})\rangle\leq 0$ by replacing $[\Sigma]$ with $-[\Sigma]$ if necessary.
By the assumption, we still have that $[\hat{\Sigma}]\cdot [\hat{\Sigma}]\geq 0$, $d(\hat{\frak{s}})\geq 0$, and
\begin{eqnarray*}
|\langle [\hat{\Sigma}],c_1(\hat{\frak{s}})\rangle|+[\hat{\Sigma}]\cdot [\hat{\Sigma}]&=&
|\langle [\Sigma],c_1(\frak{s})\rangle|+2r+[\Sigma]\cdot[\Sigma]\\ &\geq& 2b_1(M).
\end{eqnarray*}
Again by the blow-up formula \cite{FS, sal}, $\hat{\frak{s}}$ is a basic class, and hence Theorem \ref{th3} applied to $\hat{M}$ with $a=1$ and $b=U^{\frac{d(\hat{\frak{s}})}{2}}$  gives
$$|\langle [\hat{\Sigma}],c_1(\hat{\frak{s}})\rangle|+[\hat{\Sigma}]\cdot [\hat{\Sigma}]+2d(\hat{\frak{s}})\leq 2g(\hat{\Sigma})-2= 2g(\Sigma)-2.$$
Combining this with (\ref{love}), we get the desired adjunction inequality.

Again when $b_2^+(M)=1$, the proof goes through in the same way as the $b_2^+(M)>1$ case by replacing $|\cdot|$ with a minus sign in  $|\langle [\hat{\Sigma}],c_1(\hat{\frak{s}})\rangle|$ and $|\langle [\Sigma],c_1(\frak{s})\rangle|$.
%For this, the condition (\ref{thank}) needs to be satisfied, and indeed its LHS is equal to
%$$ -\langle [\Sigma],c_1(\frak{s})\rangle+[\Sigma]\cdot[\Sigma]+2r = -\langle [\Sigma],c_1(\frak{s})\rangle+[\Sigma]\cdot[\Sigma]+2b_1(M)$$ which is nonnegative by  (\ref{help2}).
\end{proof}

We divide the proof of Theorem  \ref{th4} into two cases according to whether $b_1(M)\leq \frac{d(\frak{s})}{2}$ or not.
Suppose the first case. If  $b_1(M)\geq [\Sigma]\cdot [\Sigma]$, then the proof is done by Lemma \ref{L1}, and if $b_1(M)\leq [\Sigma]\cdot [\Sigma]$, then the proof is given by Lemma \ref{L2}.

Now suppose $b_1(M)> \frac{d(\frak{s})}{2}$.
%If $[\Sigma]\cdot [\Sigma]\leq \frac{d(\frak{s})}{2}$, then the proof is obtained by Lemma \ref{L1}, and if $[\Sigma]\cdot [\Sigma]\geq \frac{d(\frak{s})}{2}$,
Then
$$|\langle [\Sigma],c_1(\frak{s})\rangle|+[\Sigma]\cdot [\Sigma]+2d(\frak{s})-2b_1(M)<
|\langle [\Sigma],c_1(\frak{s})\rangle|+[\Sigma]\cdot [\Sigma]+d(\frak{s}),$$ and hence when $b_2^+(M)> 1$, the RHS is less than or equal to  $2g(\Sigma)-2$ by Theorem \ref{th1}.
If $b_2^+(M)= 1$, we can also apply  Theorem \ref{th1} due to the condition (\ref{help2}), and hence we deduce that
\begin{eqnarray*}
-\langle [\Sigma],c_1(\frak{s})\rangle+[\Sigma]\cdot [\Sigma]+2d(\frak{s})-2b_1(M)&<&
-\langle [\Sigma],c_1(\frak{s})\rangle|+[\Sigma]\cdot [\Sigma]+d(\frak{s})\\ &\leq& 2g(\Sigma)-2.
\end{eqnarray*}
This completes the proof.

\bigskip

\noindent{\bf Acknowledgement.}  This work was supported by the National Research Foundation of Korea (NRF) grant funded by the Korea government (NRF-2014R1A1A4A01008987), and  it is the extended version of our earlier preprint, the anonymous referee of which we would like to sincerely thank for pointing out its incompleteness.

\end{document}